\newtheorem{definition}{Definition}
\newtheorem{theorem}[definition]{Theorem}
\newtheorem{lemma}[definition]{Lemma}
\newtheorem{proposition}[definition]{Proposition}
\newtheorem{remark}[definition]{Remark}
\newcommand{\N}[1][]{\ensuremath{{\mathbb{N}^{#1}} }}
\newcommand{\re}{\mathbb{R}}
\newcommand{\ce}{\mathbb{C}}
\def\Re{ \mathrm{Re}\, }
\title{The Davey Stewartson system in Weak $L^p$ Spaces}
\date{\today}
\author[Vanessa Barros]{}
\email{vbarros@impa.br}
\keywords{Davey-Stewartson System, self-similar solution, Lorentz spaces}
\subjclass[2000]{35D05, 35E15, 35Q35}
\begin{document}
\maketitle
\centerline{\scshape Vanessa Barros}
\medskip
{\footnotesize
 \centerline{Universidade Federal de Alagoas, Instituto de matem\'atica}
  \centerline{57072-090, Macei\'o, Alagoas, Brasil }
} 
\begin{abstract}
We study the global Cauchy problem associated to the Davey-Stewartson system in $\re^n,\ n=2,3$. Existence and uniqueness 
of solution are stablished
for small data in some weak $L^p$ space. We apply an  interpolation theorem and the 
generalization of the Strichartz estimates for the Schr\"odinger equation derivated in \cite{CVeVi}. 
As a consequence we obtain self-similar
solutions.

\end{abstract}

\section{Introduction}
This paper is concerned with the initial value problem (IVP) associated to the Davey-Stewartson system

\begin{equation}\label{eqds1}
\left\{\begin{array}{l}
i \partial_t u + \delta\partial_{x_1}^2 u+\sum_{j=2}^{n}{\partial_{x_j}^2u}  = \chi |u|^{\alpha}
 u +bu\partial_{x_1} \varphi,  \\
  \partial_{x_1}^2\varphi+ m\partial_{x_2}^2 \varphi+\sum_{j=3}^{n}{\partial_{x_j}^2\varphi}  = 
 \partial_{x_1}(|u|^{\alpha}),\\
u(x,0)  = u_0(x)
\end{array}
\right.\quad  (x, t) \in \re^n\times \re,\ n=2,3,
\end{equation}
\vspace{0.4cm}
\noindent \noindent
where $u=u(x,t)$ is a complex-valued function and  $\varphi=\varphi(x,t)$ is a real-valued function.

The exponent $\alpha$ is such that $\frac{4(n+1)}{n(n+2)}<\alpha<\frac{4(n+1)}{n^2}$, the parameters
$\chi$ and $ b$ are constants in $\re$, $\delta$ and  $m$ are real positive and we can consider $\delta, \chi$
 normalized in
such a way that $|\delta|=|\chi|=1$.

The Davey-Stewartson systems are  $2D$ generalization of the cubic $1D$ Schr\"odinger equation, 
$$i\partial_t u+\Delta u=|u|^2u$$
 and model
the evolution of weakly nonlinear water waves that travel predominantly in one direction but  which the
 amplitude is modulated slowly
 in two horizontal directions. 

 System \eqref{eqds1}, $n=2,\ \alpha=2$, was first derived for Davey and Stewartson \cite{DS} in the
 context of water waves,
but its analysis did not take account of the effect of surface tension (or capillarity). This effect 
was later included
by Djordjevic and Redekopp \cite{DR} who have shown that the parameter $m$ can become negative when
 capillary effects are important. 
Independently, Ablowitz and Haberman \cite{AH}
obtained a particular form of \eqref{eqds1}, $n=2$, as an example of completely integrable model also
 generalizing the two-dimensional 
nonlinear Schr\"odinger equation.

When $(\delta,\chi, b,m)=(1,-1,2,-1),(-1-2,1,1),(-1,2,-1,1)$ the system \eqref{eqds1}, $n=2$, is referred as 
$DSI, DSII$ defocusing and $DSII$ focusing respectively in the inverse scattering literature. In these cases
several results concerning the existence of solutions or lump solutions have been 
established (\cite{AF}, \cite{AnFr}, \cite{AS}, \cite{C}, \cite{FS}, \cite{FSu}, \cite{Su}) by the inverse scattering 
techniques.

In \cite{GS}, Ghidaglia and Saut studied the existence of solutions of IVP \eqref{eqds1}, $n=2,\ \alpha=2$. 
They classified the system as 
elliptic-elliptic, elliptic-hyperbolic, hyperbolic-elliptic and hyperbolic-hyperbolic, according to respective 
sign of $(\delta,m):(+, +),
 (+, -), (-, +), (-, -).$

For the elliptic-elliptic and hyperbolic-elliptic cases, Ghidaglia and Saut \cite{GS} reduced the system 
\eqref{eqds1}, $n=2$,
 to the nonlinear
 cubic Schr\"odinger equation with a nonlocal nonlinear term, i.e.
$$i \partial_t u + \delta\partial_{x_1}^2 u +\partial_{x_2}^2u =  \chi |u|^2 u +H(u),$$
where $H(u)=(\Delta^{-1}\partial_x^2|u|^2)u.$ They showed local well-posedness for data in $L^2, H^1$ and $H^2$ using
Strichartz estimates and the continuity properties of the operator $\Delta^{-1}$.

The remaining cases, elliptic-hyperbolic and hyperbolic-hyperbolic, were
treated by Linares and  Ponce \cite{LP1}, Hayashi \cite{H1}, \cite{H2}, Chihara \cite{Ch}, Hayashi and Hirata 
\cite{HH1}, \cite{HH2}, Hayashi 
and Saut \cite{HS}(see \cite{LP2} for further references).

Here we will concentrate in the elliptic-elliptic and hyperbolic-elliptic cases. We start with the motivation for this work:

From the condition $m>0$ we are allowed to reduce the Davey-Stewartson system  \eqref{eqds1} to the Schr\"odinger
equation

\begin{equation}\label{eqs5}
\left\{\begin{array}{l}
i \partial_t u + \delta\partial_{x_1}^2 u +\sum\limits_{j=2}^{n}{\partial_{x_j}^2u}  = 
 \chi |u|^{\alpha} u +bu E(|u|^{\alpha}), \\
 u(x,0)  = u_0(x),
\end{array}
\right. \qquad\qquad \forall\, x \in \re^n ,\ n=2,3,\ t \in \re,
\end{equation} 

\vspace{.2cm}
\noindent \noindent
where
\begin{equation}\label{defE1}
 \widehat{E(f)}(\xi)=\frac{\xi_1^2}{{\xi_1^2}+m\xi_2^2+\sum_{j=3}^{n}{\xi_j^2}} \hat{f}(\xi)=p(\xi)\hat{f}(\xi).
\end{equation}


Now observe that if $u(x, t)$ satisfies 
$$i u_t + \delta\partial_{x_1}^2 u +\sum_{j=2}^{n}{\partial_{x_j}^2u}  =  \chi |u|^\alpha u +bu E(|u|^\alpha),$$
 then  also does $u_\beta(x, t)=\beta^{2/\alpha} u(\beta x, \beta^2 t)$,
 for all $\beta>0.$ 

Therefore it is natural to ask whether solutions $u(x, t)$ of \eqref{eqds1}
exist and satisfy, for $\beta>0$:
$$u(x, t)=\beta^{2/\alpha} u(\beta x, \beta^2 t).$$

Such solutions are called self-similar solutions of the equation \eqref{eqs5}.

Therefore supposing local well posedness and $u$ a self-similar solution we must have
$$u(x,0)=u_\beta(x, 0),\ \forall \ \beta>0,$$
i.e.,
$$u_0(x)=\beta^{2/\alpha} u_0(\beta x).$$

 In other words, $u_0(x)$ is homogeneous with degree $-2/\alpha$ and every initial data that gives a 
self-similar solution must verify this property. Unfortunately, those functions do not belong to the usual spaces
where strong solutions exists, such as the Sobolev spaces $H^s(\re^n)$.
We shall therefore replace them by other functional spaces that allow homogeneous functions.

There are many motivations to find self-similar solutions.
One of them is that they  can  give a good description of the large time behaviour for solutions
of dispersive equations.

The idea of constructing self-similar solutions by solving the initial value problem for homogeneous data was first used by 
Giga and Miyakawa \cite{GM}, for the Navier Stokes equation in vorticity form. The idea of \cite{GM} was used latter by
Cannone and Planchon \cite{CP}, Planchon \cite{P} (for the Navier-Stokes equation);  Kwak \cite{K}, Snoussi,
Tayachi and Weissler \cite{STW} (for nonlinear parabolic problems); Kavian and Weissler \cite{KW}, Pecher \cite{Pe}, 
 Ribaud and Youssfi \cite{RY2} (for the nonlinear wave equation); Cazenave and Weissler \cite{CW1},\cite{CW2}, Ribaud 
and Youssfi
\cite{RY1}, Furioli \cite{F} (for the nonlinear Schr\"odinger equation).

In  \cite{CVeVi} Cazenave, Vega and Vilela studied the global Cauchy problem for the Schr\"odinger equation 

\begin{equation}\label{eqschr}
 i \partial_t u + \Delta u  =  \gamma |u|^\alpha u, \ \ \ \ \ \alpha>0,\ \gamma \in \re, \ (x,t) \in \re^n
 \times [0, \infty).
\end{equation}

Using a generalization of the  Strichartz estimates for the Schr\"odinger equation they showed that, under 
some restrictions on $\alpha$, if the initial value is sufficiently small in some weak $L^p$ space then there exists 
a global solution. 
This result provided a common framework to the classical $H^s$ solutions and to the self-similar solutions. We follow their 
ideas in our work. From the condition $m>0$ we are allowed to reduce the Davey-Stewartson system  \eqref{eqds1} to the
 Schr\"odinger
equation  \eqref{eqs5}. Now comparing  Schr\"odinger equations  \eqref{eqs5} and \eqref{eqschr} we observe that we have 
the nonlocal term $ u E(|u|^2)$ to treat. The main ingredient to do that will be an interpolation theorem and the 
generalization of the  Strichartz estimates for the Schr\"odinger equation derivated in \cite{CVeVi}.
As a consequence, we prove  existence and uniqueness (in the sense of distributions) to the IVP problem \eqref{eqs5}. 
As a consequence we find self-similar solutions for the problem  \eqref{eqs5} in the case $\delta>0$.

To study the IVP  \eqref{eqs5} we use its integral equivalent formulation

\begin{equation} \label{eqintds}
u(t) = U(t) u_0 +i\int_{0}^{t}U(t-s)(\chi|u|^{\alpha} u +b u E(|u|^{\alpha}))(s)ds,
\end{equation}

where $U(t)u_0$ defined as
\begin{align}
\widehat{U(t)u_0}(\xi)&=e^{-it\psi(\xi)} \widehat{u_0}(\xi),\label{defU}\\
\psi(\xi)&=4\pi^2\delta \xi_1^2+4\pi^2 \sum\limits_{j=2}^{n}{\xi_j^2},\nonumber
\end{align}

is the solution of the linear problem associated to \eqref{eqs5}.


We also define the subspace $Y \subset S'(\re^n)$ where:
$$Y=\{\varphi \in S'(\re^n):U(t)\varphi \in L^{\frac{\alpha(n+2)}{2}\infty}(\re^{n+1})\},
$$ $$\|\varphi\|_Y=\|U(t)\varphi\|_{L^{ \frac{\alpha(n+2)}{2}\infty}(\re^{n+1})},$$
and $$L^{\frac{\alpha(n+2)}{2}\infty}(\re^{n+1})$$ are weak $L^p$ spaces that we define latter.

Our main result in this paper reads as follows:

\begin{theorem}\label{TA}
There exists  $\delta_1>0$ such that given $\frac{4(n+1)}{n(n+2)}<\alpha<\frac{4(n+1)}{n^2}$ and  $u_0 \in Y$ with 
$\|u_0\|_Y<\delta_1 $ 
 then there exists
 a unique solution $u \in L^{\frac{\alpha(n+2)}{2} \infty}(\re^{n+1})$  of \eqref{eqintds} such that
$\|u\|_{L^{ \frac{\alpha(n+2)}{2}\infty}(\re^{n+1})}<3\delta_1$. 
 \end{theorem}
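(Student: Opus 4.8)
The plan is to solve the integral equation \eqref{eqintds} by a contraction mapping argument in a closed ball of the scaling-invariant space-time Lorentz space $L^{q,\infty}(\re^{n+1})$, where $q=\frac{\alpha(n+2)}{2}$ (this is precisely the exponent for which the $L^q(\re^{n+1})$ norm is invariant under the scaling $u_\beta(x,t)=\beta^{2/\alpha}u(\beta x,\beta^2 t)$, hence the natural home for self-similar solutions). First I would introduce the map
$$\Phi(u)(t)=U(t)u_0+i\int_0^t U(t-s)\big(\chi|u|^{\alpha} u+buE(|u|^{\alpha})\big)(s)\,ds$$
and the complete metric space $B=\{u\in L^{q,\infty}(\re^{n+1}):\|u\|_{L^{q,\infty}}\le 3\delta_1\}$ with the induced distance. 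The goal is to show that, for $\delta_1$ small enough, $\Phi$ maps $B$ into itself and is a strict contraction; the Banach fixed point theorem then yields the unique solution with $\|u\|_{L^{q,\infty}}<3\delta_1$.

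The linear term is immediate: by the definition of $Y$ one has exactly $\|U(t)u_0\|_{L^{q,\infty}}=\|u_0\|_Y<\delta_1$. For the local part of the Duhamel term I would invoke the generalized Strichartz estimate of \cite{CVeVi}, which bounds the retarded operator $F\mapsto\int_0^t U(t-s)F(s)\,ds$ from the appropriate dual space-time Lorentz space into $L^{q,\infty}(\re^{n+1})$. Combined with the Hölder inequality in Lorentz spaces and the identity $\big\||f|^{\alpha+1}\big\|_{L^{q/(\alpha+1),\infty}}=\|f\|_{L^{q,\infty}}^{\alpha+1}$ this gives
$$\Big\|\int_0^t U(t-s)(\chi|u|^{\alpha}u)(s)\,ds\Big\|_{L^{q,\infty}}\le C\big\||u|^{\alpha}u\big\|_{L^{q/(\alpha+1),\infty}}\le C\|u\|_{L^{q,\infty}}^{\alpha+1}.$$
The admissibility conditions required for the estimate of \cite{CVeVi} to hold are exactly what forces the hypothesis $\frac{4(n+1)}{n(n+2)}<\alpha<\frac{4(n+1)}{n^2}$.

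The main obstacle, which has no counterpart in the purely local problem \eqref{eqschr}, is the nonlocal term $buE(|u|^{\alpha})$. The symbol $p(\xi)$ in \eqref{defE1} is homogeneous of degree zero and smooth away from the origin (it is bounded in $[0,1]$ since $m>0$), so by the Mikhlin--H\"ormander multiplier theorem $E$ is bounded on $L^p(\re^n)$ for every $1<p<\infty$; as $E$ acts only in the space variable, $\mathrm{Id}_t\otimes E$ is bounded on $L^p(\re^{n+1})$ for the same range. This is where the interpolation theorem enters: real interpolation between two such strong $L^p$ bounds transfers the boundedness of $E$ to the weak space-time Lorentz spaces $L^{s,\infty}(\re^{n+1})$. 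With this at hand, and using that $\frac{1}{q/(\alpha+1)}=\frac1q+\frac{\alpha}{q}$, the Lorentz Hölder inequality yields the same control as for the local term,
$$\big\|uE(|u|^{\alpha})\big\|_{L^{q/(\alpha+1),\infty}}\le \|u\|_{L^{q,\infty}}\,\big\|E(|u|^{\alpha})\big\|_{L^{q/\alpha,\infty}}\le C\,\|u\|_{L^{q,\infty}}\,\big\||u|^{\alpha}\big\|_{L^{q/\alpha,\infty}}\le C\|u\|_{L^{q,\infty}}^{\alpha+1},$$
so the full nonlinearity can be fed into the retarded estimate of \cite{CVeVi}.

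Putting the pieces together gives $\|\Phi(u)\|_{L^{q,\infty}}\le \delta_1+C(3\delta_1)^{\alpha+1}$, so choosing $\delta_1$ so small that $3C(3\delta_1)^{\alpha}\le 2$ makes $\Phi$ map $B$ into itself. For the contraction estimate I would use the pointwise bounds $\big||u|^{\alpha}u-|v|^{\alpha}v\big|\le C(|u|^{\alpha}+|v|^{\alpha})|u-v|$ together with the decomposition $uE(|u|^{\alpha})-vE(|v|^{\alpha})=(u-v)E(|u|^{\alpha})+vE(|u|^{\alpha}-|v|^{\alpha})$, the linearity and boundedness of $E$, and again Lorentz Hölder, to obtain $\|\Phi(u)-\Phi(v)\|_{L^{q,\infty}}\le C(\|u\|_{L^{q,\infty}}^{\alpha}+\|v\|_{L^{q,\infty}}^{\alpha})\|u-v\|_{L^{q,\infty}}\le\frac12\|u-v\|_{L^{q,\infty}}$ on $B$ for $\delta_1$ small. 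The fixed point supplied by Banach's theorem is then the desired unique solution. The delicate point throughout is the nonlocal operator $E$: everything reduces to establishing its boundedness on the weak space-time Lorentz spaces, which is supplied by the Mikhlin estimate combined with the interpolation theorem.
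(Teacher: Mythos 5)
Your proposal is correct and follows essentially the same route as the paper: a contraction in the ball $\overline{B(0,3\delta_1)}$ of $L^{\frac{\alpha(n+2)}{2}\infty}(\re^{n+1})$, with the retarded Strichartz estimate transferred to weak Lorentz spaces by real interpolation, the boundedness of $E$ on $L^{p\infty}(\re^{n+1})$ obtained by interpolating its strong $L^p$ bounds, and Lorentz--H\"older to close the nonlinear estimates. The only cosmetic differences are that the paper quotes the $L^p(\re^{n+1})$ boundedness of $E$ from the literature rather than deriving it from the Mikhlin--H\"ormander theorem, and it handles the difference $|u|^{\alpha}u-|v|^{\alpha}v$ via the mean value inequality for $|u|^{\alpha}$ rather than your equivalent pointwise bound.
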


To obtain this result we will use the contraction mapping theorem and some estimates for the nonlocal operator
$E$, defined in \eqref{defE1}.

As a consequence of Theorem \ref{TA}  we show that giving any initial data in $Y$ and assuming the existence of a 
solution $u$ to the integral
 equation \eqref{eqintds} we have that $u$ is the solution (in the weak sense) of the differential equation \eqref{eqs5}.
We emphasize that Theorem \ref{TA} provides the existence of solutions to the equation \eqref{eqintds} under the assumption
of small initial data.

\begin{proposition}\label{propositionC}
Given $\frac{4(n+1)}{n(n+2)}<\alpha<\frac{4(n+1)}{n^2}$, $u_0 \in Y \text{ and let } u \in
 {L^{\frac{\alpha(n+2)}{2}\infty}(\re^{n+1})}$ be the solution of \eqref{eqintds}. It follows that $t \in \re
 \rightarrow u(t) \in S'(\re^n)$ is continuous and $u(0)=u_0$. In particular, $u$ is a solution of \eqref{eqs5}. Moreover 
$u(t_0)
 \in Y \ \text{for all} \  t_0 \in \re.$
In addition, there exist $u_{\pm}$ such that $\|U(t)u_{\pm}\|_{L^{ \frac{\alpha(n+2)}{2}\infty}(\re^{n+1})}<\infty $ and 
$U(-t)u(t)
\rightarrow u_{\pm}$ in $S'(\re^n)$ as $t\rightarrow
\pm \infty.$
\end{proposition}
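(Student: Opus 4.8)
The plan is to verify the four assertions in turn, using throughout the global space-time estimates from \cite{CVeVi} and the boundedness of the nonlocal multiplier $E$ on the relevant Lorentz spaces already exploited in the proof of Theorem \ref{TA}. Write $q=\alpha(n+2)/2$ and $F(u)=\chi|u|^{\alpha}u+bu\,E(|u|^{\alpha})$, so that \eqref{eqintds} reads $u(t)=U(t)u_0+i\int_0^t U(t-s)F(u)(s)\,ds$. A single fact drives everything: since $u\in L^{q\infty}(\re^{n+1})$, the estimates behind Theorem \ref{TA} place $F(u)$ in the predual space-time Lorentz space against which the inhomogeneous Strichartz estimate is dual, and consequently, by Lorentz--H\"older pairing with $U(\cdot)\varphi$ (which lies in the dual space-time space by the homogeneous estimate for Schwartz data), the map $s\mapsto\langle F(u)(s),U(s)\varphi\rangle$ is absolutely integrable over $\re$ for every $\varphi\in S(\re^n)$.

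First I would establish continuity of $t\mapsto u(t)$ in $S'(\re^n)$ and the identity $u(0)=u_0$. Pairing the linear part with $\varphi\in S(\re^n)$ gives $\langle U(t)u_0,\varphi\rangle=\langle u_0,U(-t)\varphi\rangle$, which is continuous in $t$ because $t\mapsto U(-t)\varphi$ is continuous into $S(\re^n)$, and equals $\langle u_0,\varphi\rangle$ at $t=0$. For the Duhamel part, $\langle\int_0^t U(t-s)F(u)(s)\,ds,\varphi\rangle=\int_0^t\langle F(u)(s),U(s-t)\varphi\rangle\,ds$, and the absolute integrability noted above, together with the continuity of the test-function evolution, lets me pass to the limit by dominated convergence; in particular this term tends to $0$ as $t\to0$, giving $u(0)=u_0$. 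That $u$ solves \eqref{eqs5} in the distributional sense then follows by applying $i\pa_t+\delta\pa_{x_1}^2+\sum_{j\ge2}\pa_{x_j}^2$ to the integral identity: the free evolution $U(t)u_0$ is annihilated by this operator through the definition \eqref{defU} of $\psi$, while distributional differentiation of the Duhamel term reproduces $F(u)$ via the fundamental theorem of calculus.

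To prove $u(t_0)\in Y$, I would use the semigroup law. Evaluating \eqref{eqintds} at $t_0+\tau$ and substituting $s=t_0+\sigma$ yields
\[
U(\tau)u(t_0)=u(t_0+\tau)-i\int_0^{\tau}U(\tau-\sigma)F(u)(t_0+\sigma)\,d\sigma .
\]
The first term is a time translate of $u$, hence in $L^{q\infty}(\re^{n+1})$ with the same norm; the second is the Duhamel operator applied to the time-translated nonlinearity, which the inhomogeneous estimate of Theorem \ref{TA} bounds by the predual norm of $F(u)(t_0+\cdot)$, again finite by translation invariance. Hence $\|u(t_0)\|_Y=\|U(\cdot)u(t_0)\|_{L^{q\infty}}<\infty$.

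Finally, for the scattering states I would set $u_\pm=u_0+i\int_0^{\pm\infty}U(-s)F(u)(s)\,ds$, interpreting the improper integral in $S'(\re^n)$ through $\langle u_\pm-u_0,\varphi\rangle=i\int_0^{\pm\infty}\langle F(u)(s),U(s)\varphi\rangle\,ds$; this converges by the absolute integrability established above. Since $U(-t)u(t)=u_0+i\int_0^t U(-s)F(u)(s)\,ds$ by \eqref{eqintds}, this gives $U(-t)u(t)\to u_\pm$ in $S'(\re^n)$ as $t\to\pm\infty$, and $\|U(t)u_\pm\|_{L^{q\infty}}<\infty$ follows from the global-in-time homogeneous Strichartz estimate applied to the limiting data. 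The main obstacle throughout is the nonlocal contribution $u\,E(|u|^{\alpha})$: one must ensure that $E$---a Fourier multiplier with the bounded, zero-homogeneous symbol $p(\xi)$---acts boundedly on the Lorentz space in play, which a Mikhlin--H\"ormander argument interpolated to Lorentz scales provides, and it is precisely this that places $F(u)$ in the predual Strichartz space and renders every integral above absolutely convergent.
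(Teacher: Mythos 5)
Your overall architecture matches the paper's: handle the linear and Duhamel parts separately for the continuity of $t\mapsto u(t)$ and for $u(0)=u_0$, differentiate the integral identity to get the PDE, use the group law together with the space--time estimates of Proposition \ref{prop3} for $u(t_0)\in Y$, and define $u_\pm$ by the improper Duhamel integral. The one genuinely different device is how you make the Duhamel pairings absolutely convergent. The paper invokes Remark \ref{mergulho} to split $|u|^{\alpha}u$ and $uE(|u|^{\alpha})$ as sums $f_1+f_2$ and $f_3+f_4$ with each $f_j$ in an honest Lebesgue space $L^{p_j}(\re^{n+1})$ straddling $\frac{\alpha(n+2)}{2(\alpha+1)}$, and then runs dominated convergence on each piece; you instead pair $F(u)\in L^{p\,\infty}(\re^{n+1})$, $p=\frac{\alpha(n+2)}{2(\alpha+1)}$, directly against $U(\cdot)\varphi$ by Lorentz--H\"older. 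Your route is tidier if it works, but it rests on the unproved claim that $U(\cdot)\varphi$ lies in the Lorentz dual $L^{p'1}(\re^{n+1})$ for Schwartz $\varphi$. This is true, but only because of the standing hypothesis on $\alpha$: the dispersive bound $\|U(t)\varphi\|_{L^{r}_x}\lesssim \langle t\rangle^{-n(\frac12-\frac1r)}$ gives $U(\cdot)\varphi\in L^{r}(\re^{n+1})$ exactly for $r>\frac{2(n+1)}{n}$, and $p'=\frac{\alpha(n+2)}{\alpha n-2}>\frac{2(n+1)}{n}$ is equivalent to $\alpha<\frac{4(n+1)}{n^2}$; one then upgrades to $L^{p'1}$ by real interpolation since the inequality is strict. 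You must supply this computation, since it is the ``single fact that drives everything'' in your write-up. Your derivation of $u(t_0)\in Y$ from the translation identity $U(\tau)u(t_0)=u(t_0+\tau)-i\int_0^{\tau}U(\tau-\sigma)F(u)(t_0+\sigma)\,d\sigma$ is equivalent to, and arguably cleaner than, the paper's insertion of $\chi_{(0,t_0)}F$ into the estimate \eqref{dTT^*}.

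One step is wrong as stated: you justify $\|U(t)u_\pm\|_{L^{\frac{\alpha(n+2)}{2}\infty}(\re^{n+1})}<\infty$ by ``the global-in-time homogeneous Strichartz estimate applied to the limiting data.'' There is no norm of $u_\pm$ to which a homogeneous estimate could be applied: $u_\pm$ is produced only as a tempered distribution, not as an element of $L^2$ or of any space carrying a Strichartz estimate. The correct argument, which is the paper's, is to write $U(t)u_\pm=U(t)u_0+i\int_0^{\pm\infty}U(t-\tau)F(u)(\tau)\,d\tau$, bound the first term by the hypothesis $u_0\in Y$, and bound the second by the untruncated estimate \eqref{desTT*} applied to $\chi_{(0,\pm\infty)}F(u)$, together with the H\"older and Proposition \ref{prop2} estimates already used in Theorem \ref{TA}. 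You have all the ingredients on the page; that sentence needs to be replaced by this argument.
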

This paper is organized as follows: in section \ref{global} we show the main theorem. In preparation for that we will establish
some needed estimates for the integral operator.

Last section will be devoted to find self-similar solutions.

\section{Global existence in weak $L^p$ spaces}\label{global}

Let us define the weak $L^p$ spaces we will use in the following:
\begin{definition}\label{def1}
$L^{p \infty}(\re^n)=\{f :\re^n \rightarrow \mathbb{C} $ measurable $;\|f\|_{L^{p \infty}(\re^{n})} :=\sup\limits_
{\lambda>0}\lambda{\alpha(\lambda,
 f)}^{1/p}< \infty \}$\\
where
$$\ {\alpha(\lambda, f)}=\mu(\{x\in{\re}^n;|f(x)|>\lambda\}),$$and
$$\mu=\text{Lebesgue measure}.$$
\end{definition}


The reader should  refer to \cite{BeL} for details.
\begin{remark}
Using a  change of variables it is easy to see that for any $ \ \varphi \in S'(\re^n),\ 1\leq p\leq \infty$ and $\tau \in \re$:
\begin{equation*}\label{idA}
 \|U(t)\varphi\|_{L^{p \infty}(\re^{n+1})}  = \|U(t+\tau)\varphi\|_{L^{p \infty}(\re^{n+1})},
\end{equation*}
where $U(t)$ is the unitary group defined in \eqref{defU}.
\end{remark}
The next theorem establishes a relationship between Lorentz Spaces $L^{p\infty}$ and $L^q$ spaces:

\begin{theorem}[Interpolation's theorem] \label{teoint} 
Given $0<p_0<p_1 \leq \infty$, then for all $ p$ and $\theta$ such that  $\frac{1}{p}=
\frac{1-\theta}{p_0}+\frac{\theta}{p_1}$ and $0<\theta<1$ we have :
$$(L^{p_0},L^{p_1})_{\theta,\infty}=L^{p\infty} \ 
\text{ with }\  \|f\|_{(L^{p_0},L^{p_1})_{\theta,\infty}}=\|f\|_{L^{p\infty}},$$
where

$$(L^{p_0},L^{p_1})_{\theta,\infty}=\{a \text{ Lebesgue measurable};\|a\|_{(L^{p_0},L^{p_1})_{\theta,\infty}}:=
\sup\limits_{t>0}t^{-\theta}k(t,a)<\infty\}$$
and
$$k(t,a)=\inf\limits_{a=a_0+a_1}(\|a_0\|_{L^{p_0}}+t\|a_1\|_{L^{p_1}}).$$
\end{theorem}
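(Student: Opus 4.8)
The plan is to pass to the decreasing rearrangement $f^*(s)=\inf\{\lambda>0:\alpha(\lambda,f)\le s\}$ and to evaluate the $K$-functional $k(t,f)$ for the couple $(L^{p_0},L^{p_1})$ against it. First I would record the elementary identity
$$\|f\|_{L^{p\infty}}=\sup_{s>0}s^{1/p}f^*(s),$$
which follows directly from Definition \ref{def1} by rewriting the distribution function $\alpha(\lambda,f)$ in terms of $f^*$. This places both sides of the asserted equality on the common footing of the rearrangement, and reduces the theorem to computing $\sup_{t>0}t^{-\theta}k(t,f)$ in terms of $f^*$.

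The core step is to produce, for each $t>0$, a near-optimal splitting of $f$ by truncation at a height $\lambda>0$: set $f_1=\mathrm{sgn}(f)\min(|f|,\lambda)$ and $f_0=f-f_1$, so that $f_0^*(s)=(f^*(s)-\lambda)_+$ and $f_1^*(s)=\min(f^*(s),\lambda)$. Then $\|f_0\|_{L^{p_0}}$ and $\|f_1\|_{L^{p_1}}$ become explicit integrals of $f^*$, and writing $\lambda=f^*(\sigma)$ one is led to the two-sided estimate
$$k(t,f)\sim\Big(\int_0^{\sigma(t)}f^*(s)^{p_0}\,ds\Big)^{1/p_0}+t\Big(\int_{\sigma(t)}^{\infty}f^*(s)^{p_1}\,ds\Big)^{1/p_1}.$$
The upper bound is immediate from this admissible decomposition; the lower bound is the only delicate point, and there I would invoke the subadditivity of the rearrangement, $f^*(s_0+s_1)\le g_0^*(s_0)+g_1^*(s_1)$, to control $f^*$ by the pieces of an arbitrary decomposition $f=g_0+g_1$ and thereby bound $k(t,f)$ from below by the same integral expression.

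With the $K$-functional pinned down, the final step is exponent bookkeeping. For the lower bound, monotonicity of $f^*$ gives $\int_0^{\sigma(t)}f^*(s)^{p_0}\,ds\ge\sigma(t)f^*(\sigma(t))^{p_0}$, so $t^{-\theta}k(t,f)\gtrsim t^{-\theta}\sigma(t)^{1/p_0}f^*(\sigma(t))$; choosing $\sigma(t)$ so the two integral terms balance forces $t\sim\sigma(t)^{1/p_0-1/p_1}$, and the identity $\tfrac1p=\tfrac{1-\theta}{p_0}+\tfrac{\theta}{p_1}$ collapses the $\sigma$-exponent $\tfrac1{p_0}-\theta(\tfrac1{p_0}-\tfrac1{p_1})$ to exactly $\tfrac1p$, yielding $t^{-\theta}k(t,f)\gtrsim\sigma(t)^{1/p}f^*(\sigma(t))$. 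Since $\sigma(\cdot)$ is onto $(0,\infty)$, taking the supremum recovers $\|f\|_{L^{p\infty}}$; for the reverse inequality the bound $f^*(s)\le\|f\|_{L^{p\infty}}s^{-1/p}$ turns both integrals into convergent power integrals (convergence is exactly $p_0<p<p_1$) and the same arithmetic gives the matching upper bound. I expect the main obstacle to be twofold: the endpoint case $p_1=\infty$, where the second integral degenerates to $t\,f^*(\sigma(t))$ and must be handled separately, and tracking the constants finely enough to upgrade the equivalence of norms coming from the truncation argument to the exact equality claimed in the statement (the guiding exact formula being $k(t,f;L^1,L^\infty)=\int_0^t f^*(s)\,ds$ of Peetre, see \cite{BeL}); the $q=\infty$ (supremum) structure of the functor is precisely what makes this constant bookkeeping tractable.
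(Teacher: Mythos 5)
The paper does not actually prove this theorem: it simply defers to \cite{BeL}. Your sketch is, in substance, the standard proof found in that reference (the Holmstedt-type computation of the $K$-functional for the couple $(L^{p_0},L^{p_1})$ via truncation of $f$ at a height $\lambda=f^*(\sigma(t))$, followed by the exponent balancing $t\sim\sigma(t)^{1/p_0-1/p_1}$), and the outline is sound: the admissible truncation gives the upper bound for $k(t,f)$, subadditivity of the rearrangement gives the lower bound $f^*(2\sigma)\le\sigma^{-1/p_0}\bigl(\|g_0\|_{L^{p_0}}+t\|g_1\|_{L^{p_1}}\bigr)$ for an arbitrary splitting, and your arithmetic $\tfrac1{p_0}-\theta\bigl(\tfrac1{p_0}-\tfrac1{p_1}\bigr)=\tfrac1p$ is correct, as is the role of $p_0<p<p_1$ in making the power integrals converge. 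The one point to be explicit about is the literal equality $\|f\|_{(L^{p_0},L^{p_1})_{\theta,\infty}}=\|f\|_{L^{p\infty}}$ claimed in the statement: the truncation argument (and the theorem as stated in \cite{BeL}) yields only a two-sided equivalence of quasi-norms with constants depending on $p_0,p_1,\theta$, not exact equality, except in special cases such as $(L^1,L^\infty)$ where Peetre's formula $k(t,f)=\int_0^t f^*(s)\,ds$ is available. You correctly flag this, and it is harmless for the paper: the only place the theorem is invoked (the proof of Lemma \ref{lemma2} and Proposition \ref{prop3}) uses it purely to transfer boundedness, for which equivalence suffices. Your other flagged obstacle, the endpoint $p_1=\infty$, is likewise real but routine, and since $p_0$ is only assumed positive you should also allow for the quasi-triangle inequality when $p_0<1$; neither affects the structure of the argument.
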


\begin{proof}
 We refer to \cite{BeL} for a proof of this theorem.
\end{proof}
\begin{remark}\label{mergulho}
Another relationship between Lorentz Spaces and $L^p$ spaces is given by the following decomposition:

Let $1\leq p_1<p<p_2<\infty$. Then
\begin{equation*}\label{dlp}
L^{p\infty}=L^{p_1}+L^{p_2}.
\end{equation*}
\end{remark}
The next result is a generalization of the classical Strichartz estimates for the Schr\"odinger equation. This was proved by Vilela in  \cite{McV}.
\begin{theorem}\label{teost}
 Consider $r , \tilde{r}, q$ and $\tilde{q} $ such that 
$$ 2 < r,  \tilde{r} \leq \infty,\ \ \dfrac{1}{{\tilde{r}}^{'}}-\dfrac{1}{r}<\dfrac{2}{n},$$
\begin{equation}\label{des1teost}
 \dfrac{1}{{\tilde{q}}^{'}}-\dfrac{1}{q}+\dfrac{n}{2}(\dfrac{1}{{\tilde{r}}^{'}}-\dfrac{1}{r})=1,
\end{equation}

\begin{equation}\label{des2teost}
\left\{\begin{array}{rclcc}
 r, \tilde{r} & \neq & \infty & \text{ if } & n=2, \\
\dfrac{n-2}{n}(1-\dfrac{1}{{\tilde{r}}^{'}}) & \leq & \dfrac{1}{r} \leq  \dfrac{n}{n-2}(1-\dfrac{1}{{\tilde{r}}^{'}})
& \text{ if }  & n\geq 3,
\end{array}
\right.
\end{equation}
and
\begin{equation*}
\left\{\begin{array}{rcl}
0< \dfrac{1}{q} \leq  \dfrac{1}{{\tilde{q}}^{'}} <1-\dfrac{n}{2}( \dfrac{1}{{\tilde{r}}^{'}}+\dfrac{1}{r}-1) 
\quad \text{ if } \quad
 \dfrac{1}{{\tilde{r}}^{'}}+\dfrac{1}{r}\geq 1, \\
 -\dfrac{n}{2}(\dfrac{1}{{\tilde{r}}^{'}}+\dfrac{1}{r}-1)<
\dfrac{1}{q}\leq\dfrac{1}{{\tilde{q}}^{'}}<1 
\quad \text{ if }  \quad \dfrac{1}{{\tilde{r}}^{'}}+\dfrac{1}{r}< 1.
\end{array}
\right.
\end{equation*}
\vspace{0.1cm}

Then we have the following inequalities:
\begin{align}
 \|\int_0^te^{i(t-\tau)\Delta}F(\cdot, \tau)d\tau\|_{L^q_tL^r_x}\leq c \|F\|_
{L^{{\tilde{q}}^{'}}_tL^{{\tilde{r}}^{'}}_x}\label{des3teost},
\end{align}
\begin{align*}
\|\int_{-\infty}^te^{i(t-\tau)\Delta}F(\cdot, \tau)d\tau\|_{L^q_tL^r_x}\leq c 
\|F\|_{L^{{\tilde{q}}^{'}}_tL^{{\tilde{r}}^{'}}_x},
\end{align*}
\begin{align*}
\|\int_{-\infty}^{+\infty}e^{i(t-\tau)\Delta}F(\cdot, \tau)d\tau\|_{L^q_tL^r_x}\leq c \|F\|
_{L^{{\tilde{q}}^{'}}_tL^{{\tilde{r}}^{'}}_x}\label{des4teost}.
\end{align*}
\end{theorem}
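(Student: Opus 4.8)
The plan is to prove the three inequalities in two stages: first the untruncated estimate (the last inequality, for $\int_{-\infty}^{+\infty}e^{i(t-\tau)\Delta}F(\cdot,\tau)\,d\tau$) and then to deduce from it the two retarded estimates \eqref{des3teost} and the middle one. The two elementary building blocks are the conservation law $\|e^{it\Delta}f\|_{L^2_x}=\|f\|_{L^2_x}$ and the dispersive decay estimate $\|e^{it\Delta}f\|_{L^\rho_x}\le c\,|t|^{-\frac{n}{2}\left(1-\frac{2}{\rho}\right)}\|f\|_{L^{\rho'}_x}$, valid for $2\le\rho\le\infty$ by interpolating the $L^1\to L^\infty$ kernel bound against the $L^2$ isometry. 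Everything else is organizing these two inputs in the right function-space framework.

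First I would pass to the bilinear form: by duality, the untruncated estimate is equivalent to the bound $|B(F,G)|\le c\|F\|_{L^{\tilde q'}_tL^{\tilde r'}_x}\|G\|_{L^{q'}_tL^{r'}_x}$ for
\[ B(F,G)=\int_{\re}\int_{\re}\langle e^{i(t-\tau)\Delta}F(\cdot,\tau),\,G(\cdot,t)\rangle_x\,d\tau\,dt. \]
Applying the dispersive estimate to the inner spatial pairing produces the time kernel $|t-\tau|^{-\sigma}$ with $\sigma=\frac{n}{2}\left(\frac{1}{\tilde{r}'}-\frac{1}{r}\right)$; the hypothesis $\frac{1}{\tilde{r}'}-\frac{1}{r}<\frac{2}{n}$ is exactly the statement $\sigma<1$, which is precisely what makes this kernel admissible for the Hardy--Littlewood--Sobolev inequality in the $t$ variable. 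The scaling identity \eqref{des1teost} is then the dimensional-balance condition that matches the HLS exponents to $\tilde q'$ and $q'$ (indeed $\frac{1}{\tilde q'}+\frac{1}{q'}+\sigma=2$ is equivalent to \eqref{des1teost}), while the constraints \eqref{des2teost} together with the bounds on $1/q$ and $1/\tilde q'$ keep all intermediate H\"older and HLS exponents in the admissible range.

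The genuinely off-diagonal feature, that $\tilde r'$ and $r'$ need not be dual, is where the argument becomes delicate and is the main obstacle, since the fixed-time dispersive estimate only controls dual spatial pairs with the scaling-optimal power. I would handle this by a Littlewood--Paley decomposition: localize $F$ and $G$ to dyadic frequency shells, use a Bernstein-improved dispersive bound on each shell to recover the scaling power $\sigma$, and then sum the dyadic contributions, the convergence of the sum being governed precisely by the strict inequalities in \eqref{des2teost} and by $\frac{1}{\tilde{r}'}-\frac{1}{r}<\frac{2}{n}$; passing to the Lorentz scale requires replacing the Young and HLS steps by their real-interpolation (weak-type) versions. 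Finally, to go from the untruncated estimate to the retarded operators $\int_0^t$ and $\int_{-\infty}^t$, I would invoke the Christ--Kiselev lemma, whose hypothesis $\tilde q'<q$ is guaranteed by $\frac{1}{q}<\frac{1}{\tilde q'}$; the borderline $\frac{1}{q}=\frac{1}{\tilde q'}$ falls outside the lemma and would have to be excluded or treated by a separate direct estimate. Since the statement is attributed to Vilela, one may of course simply cite \cite{McV} for the complete argument.
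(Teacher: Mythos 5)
You should first be aware that the paper itself contains no argument for this statement: its ``proof'' is the single line ``We refer to \cite{McV} for a proof of this theorem,'' so your closing remark (just cite \cite{McV}) already coincides with what the author actually does. Judged as a proof sketch, your skeleton is the correct one and matches the architecture of the proofs in the literature (Kato, Foschi, Vilela): dualize to a bilinear form, use the fixed-time dispersive estimate to produce the kernel $|t-\tau|^{-\sigma}$, observe that $\frac{1}{{\tilde r}'}-\frac1r<\frac2n$ is exactly $\sigma<1$ and that \eqref{des1teost} is exactly the Hardy--Littlewood--Sobolev balance $\frac{1}{{\tilde q}'}+\frac{1}{q'}+\sigma=2$, and recover the retarded operators from the untruncated one via Christ--Kiselev. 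Those identifications are all correct.

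The gap is precisely at the step you yourself call the main obstacle, and your proposed mechanism for it is not the one that is known to work. When ${\tilde r}'\neq r'$ there is no fixed-time bound from $L^{{\tilde r}'}_x$ to $L^r_x$ at all, so the bilinear form cannot be reduced to a one-dimensional HLS inequality by a pointwise-in-time estimate; the published proofs instead perform a Whitney-type dyadic decomposition of the region $|t-\tau|\sim 2^j$, prove on each slab a \emph{local} estimate built from the admissible (dual-exponent) Strichartz inequalities together with H\"older in $t$, and then sum in $j$ by a bilinear real-interpolation argument, with conditions of the type \eqref{des2teost} guaranteeing geometric decay off a critical line and interpolation recovering the equality cases. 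Your substitute --- frequency Littlewood--Paley plus Bernstein on each shell --- produces for each dyadic frequency $N$ a competing power of $N$ and of $|t-\tau|$, and you give no argument that these pieces can be resummed in the output norm $L^q_tL^r_x$ (an $\ell^1$ sum over $N$ loses, and no orthogonality is invoked); as written this step is asserted, not proved. Two smaller points: the theorem as stated permits $\frac1q=\frac{1}{{\tilde q}'}$, so the Christ--Kiselev borderline you flag is actually inside the hypotheses and cannot simply be ``excluded''; and the estimates here are in the strong mixed norms $L^q_tL^r_x$, so your remark about passing to the Lorentz scale belongs to the later step of the paper (Proposition \ref{prop3} via Lemma \ref{lemma2}), not to this theorem.
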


 \begin{proof}
  We refer to \cite{McV} for a proof of this theorem.
 \end{proof}
 \begin{remark}
  Theorem \ref{teost} also holds for $U(t)$.
\end{remark}

To prove Theorem \ref{TA} we need some results:

\begin{proposition}\label{prop2}
Consider $F:\re^n_x \times \re_t \rightarrow \ce$. Then for $1<p<\infty:$
\begin{equation*}\label{dmult}
  \|E(F)\|_{L^{p \infty}(\re^{n+1})} \leq  \| F\|_{L^{p \infty}(\re^{n+1})}.
\end{equation*}
\end{proposition}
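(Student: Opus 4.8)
The plan is to treat $E$ as a Fourier multiplier acting only in the spatial variables, prove its boundedness on two fixed Lebesgue spaces $L^{p_0}(\re^{n+1})$ and $L^{p_1}(\re^{n+1})$ for a suitable pair $1<p_0<p<p_1<\infty$, and then transfer the estimate to $L^{p\infty}$ by the real interpolation identity of Theorem \ref{teoint}. Since $E$ does not touch the time variable, everything reduces to understanding the multiplier $p(\xi)$ of \eqref{defE1} on $\re^n$ and then integrating in $t$.

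First I would record the structure of the symbol $p(\xi)=\xi_1^2/(\xi_1^2+m\xi_2^2+\sum_{j=3}^n\xi_j^2)$. Because $m>0$, the denominator is a positive-definite quadratic form vanishing only at the origin, so $p\in C^\infty(\re^n\setminus\{0\})$, it is homogeneous of degree $0$, and $0\le p(\xi)\le 1$. Homogeneity of degree $0$ gives at once the Mikhlin--H\"ormander bounds $|\xi|^{|\gamma|}|\partial^\gamma p(\xi)|\le C_\gamma$ for every multi-index $\gamma$, so the multiplier theorem yields boundedness of $E$ on $L^q(\re^n)$ for all $1<q<\infty$. Writing $E$ as $E\otimes\mathrm{Id}_t$ and integrating the spatial estimate $\|E(F(\cdot,t))\|_{L^q_x}\le C_q\|F(\cdot,t)\|_{L^q_x}$ in $t$ via Tonelli upgrades this to $\|E(F)\|_{L^q(\re^{n+1})}\le C_q\|F\|_{L^q(\re^{n+1})}$.

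Next I would interpolate. Fix $1<p_0<p<p_1<\infty$ and $\theta\in(0,1)$ with $\tfrac1p=\tfrac{1-\theta}{p_0}+\tfrac{\theta}{p_1}$. Transfer to the interpolation space follows directly from the $K$-functional in Theorem \ref{teoint}: for any splitting $F=F_0+F_1$ we have $E(F)=E(F_0)+E(F_1)$, so
\begin{equation*}
k(t,E(F))\le \|E(F_0)\|_{L^{p_0}}+t\,\|E(F_1)\|_{L^{p_1}}\le C_{p_0}\|F_0\|_{L^{p_0}}+t\,C_{p_1}\|F_1\|_{L^{p_1}}.
\end{equation*}
Taking the infimum over splittings, together with the standard rescaling in $t$ that balances the two constants, gives $k(t,E(F))\le C_{p_0}^{1-\theta}C_{p_1}^{\theta}\,k(t,F)$, hence $\sup_t t^{-\theta}k(t,E(F))\le C_{p_0}^{1-\theta}C_{p_1}^{\theta}\sup_t t^{-\theta}k(t,F)$. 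By the norm identity $\|\cdot\|_{(L^{p_0},L^{p_1})_{\theta,\infty}}=\|\cdot\|_{L^{p\infty}}$ this is exactly $\|E(F)\|_{L^{p\infty}(\re^{n+1})}\le C_{p_0}^{1-\theta}C_{p_1}^{\theta}\|F\|_{L^{p\infty}(\re^{n+1})}$.

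The hard part will be the size of the interpolation constant $C_{p_0}^{1-\theta}C_{p_1}^{\theta}$, which is the only place where the claimed constant $1$ is genuinely at issue. The bound $\sup_\xi p(\xi)=1$ gives, via Plancherel, that $E$ is a contraction on $L^2$; but since $E$ is self-adjoint with real symbol, $q\mapsto\|E\|_{L^q\to L^q}$ is log-convex (Riesz--Thorin) and symmetric about $1/q=1/2$, so it equals $1$ only at $q=2$ and is strictly larger than $1$ for every $q\ne2$. Thus $C_{p_0}$ and $C_{p_1}$ cannot both be taken to be $1$ unless $p=2$, and the argument above delivers $L^{p\infty}$-boundedness of $E$ with a finite constant. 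This boundedness is what is actually needed to close the contraction scheme for Theorem \ref{TA}; obtaining the literal constant $1$ would require exploiting $0\le p(\xi)\le1$ beyond the $L^2$ level, and I expect this to be the real obstacle, so the estimate is best read as boundedness with an implicit constant.
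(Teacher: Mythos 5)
Your proof follows essentially the same route as the paper: strong $L^q(\re^{n+1})\to L^q(\re^{n+1})$ bounds for $E$, transferred to $L^{p\infty}$ through the real-interpolation identity of Theorem \ref{teoint} via the $K$-functional. The paper packages the interpolation step as Lemma \ref{lemma2} (whose injectivity hypothesis is, as your argument implicitly shows, unnecessary: one only needs that every splitting $F=F_0+F_1$ induces a splitting $E(F)=E(F_0)+E(F_1)$, which makes the infimum defining $k(t,E(F))$ run over a larger set), and it obtains the strong-type input not from the Mikhlin--H\"ormander theorem but by citing \cite{X} for the estimate $\|E(F)\|_{L^{q}}\le\|F\|_{L^{q}}$ \emph{with constant $1$}; that citation is precisely what makes the paper's constant in Proposition \ref{prop2} equal to $1$, since the interpolated constant is then $1^{1-\theta}1^{\theta}=1$. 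Your caveat about the constant is therefore the only real point of divergence, and two remarks are in order. First, your assertion that $\|E\|_{L^q\to L^q}>1$ for every $q\ne2$ does not follow from log-convexity and symmetry alone: a log-convex function symmetric about $1/q=1/2$, bounded below by $1$ and equal to $1$ at the centre, may perfectly well be identically $1$, so you have not shown the constant-$1$ statement to be false --- only that your method (Mikhlin plus interpolation) does not reach it. Second, for the use made of Proposition \ref{prop2} in the proofs of Theorem \ref{TA} and Proposition \ref{propositionC}, any finite constant suffices, so your version of the estimate is fully adequate for the paper's purposes.
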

Instead of proving Proposition \ref{prop2}
 we establish a more general 
result:

\begin{lemma}\label{lemma2}
Let  $A$ be a linear injective operator and suppose that for each  $1\leq p<\infty$ there exists $1\leq q=q(p)<\infty$ 
such that
 $\ A:L^p(\re^n)\rightarrow L^q(\re^n)$ is bounded. Then $A$
is bounded from $L^{p \infty}(\re^n)$ to $L^{q \infty}(\re^n)$.
\end{lemma}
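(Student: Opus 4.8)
The plan is to realize both the source and the target weak space as real interpolation spaces and then to invoke the fact that the $K$-functional method carries bounded linear maps between the endpoints of a Banach couple to bounded maps between the interpolated spaces. Concretely, fix $p$ with $1\le p<\infty$ and choose exponents $p_0,p_1$ with $1\le p_0<p<p_1<\infty$ together with $\theta\in(0,1)$ determined by $\frac{1}{p}=\frac{1-\theta}{p_0}+\frac{\theta}{p_1}$. By Theorem~\ref{teoint} this yields the identification $L^{p\infty}(\re^n)=(L^{p_0},L^{p_1})_{\theta,\infty}$ with equality of norms, so that it suffices to estimate the interpolation norm of $Af$.

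The hypothesis supplies $q_0=q(p_0)$ and $q_1=q(p_1)$ with $A:L^{p_i}\to L^{q_i}$ bounded, say with norms $M_0,M_1$. The first main step is the fundamental estimate of the real method: for $f\in L^{p_0}+L^{p_1}$ and any splitting $f=f_0+f_1$ with $f_i\in L^{p_i}$, the image $Af=Af_0+Af_1$ is an admissible decomposition of $Af$ in $L^{q_0}+L^{q_1}$, whence
$$k(t,Af)\le \|Af_0\|_{L^{q_0}}+t\|Af_1\|_{L^{q_1}}\le M_0\bigl(\|f_0\|_{L^{p_0}}+t\tfrac{M_1}{M_0}\|f_1\|_{L^{p_1}}\bigr),$$
where $k$ is taken for the respective couples. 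Taking the infimum over decompositions gives $k(t,Af)\le M_0\,k(tM_1/M_0,f)$, and multiplying by $t^{-\theta}$ and passing to the supremum over $t>0$ (with the substitution $s=tM_1/M_0$) produces
$$\|Af\|_{(L^{q_0},L^{q_1})_{\theta,\infty}}\le M_0^{1-\theta}M_1^{\theta}\,\|f\|_{(L^{p_0},L^{p_1})_{\theta,\infty}}.$$
Here the injectivity hypothesis is what lets me treat $A$ as a single well-defined map on the sum $L^{p_0}+L^{p_1}$, so that the two $L^{p_i}$-bounded actions are restrictions of one operator and the splitting $Af=Af_0+Af_1$ is unambiguous.

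Finally I would apply Theorem~\ref{teoint} a second time, now on the target side, to identify $(L^{q_0},L^{q_1})_{\theta,\infty}=L^{q\infty}(\re^n)$ with $\frac{1}{q}=\frac{1-\theta}{q_0}+\frac{\theta}{q_1}$; combined with the identification of the source space this gives the claimed boundedness $A:L^{p\infty}\to L^{q\infty}$. The hard part is organizational rather than analytic: one must choose the endpoints $p_0,p_1$ so that the interpolated target exponent equals the prescribed $q=q(p)$, and so that $q_0\ne q_1$ (otherwise the target couple is degenerate and Theorem~\ref{teoint} no longer identifies a genuine weak space). This is immediate in the situation relevant to Proposition~\ref{prop2}, where $q(p)=p$ and any $p_0\ne p_1$ forces the interpolated exponent to equal $p$; in general it requires $1/q(p)$ to depend affinely on $1/p$, which I would record as the structural assumption under which the limiting exponent is correctly pinned down.
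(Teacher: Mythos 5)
Your proof is correct and follows essentially the same route as the paper's: identify $L^{p\infty}$ and $L^{q\infty}$ as real interpolation spaces via Theorem~\ref{teoint} and push the $K$-functional through $A$ using the two endpoint bounds. You are in fact more careful than the paper on two points: you retain the operator norms in the constant $M_0^{1-\theta}M_1^{\theta}$, and you correctly flag that the statement only pins down the target space when $1/q(p)$ depends affinely on $1/p$ (so that the interpolated exponent really is the prescribed $q(p)$) --- a hypothesis the paper leaves implicit but which holds trivially in the only application, where $q(p)=p$ for the operator $E$.
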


\begin{proof}

In fact, fix $1\leq p <\infty$. Take $1\leq p_0,\ p_1<\infty$ and $0<\theta<1$ such that $\frac{1}{p}=\frac{1-\theta}{p_0}+
\frac{\theta}{p_1}$.
 By Theorem \ref{teoint} we have $\|A(f)\|_{L^{p \infty}(\re^{n})}=
\|A(f)\|_{(L^{p_0},L^{p_1})_{\theta \infty}}$. 

If $$f=f_0+f_1 \in L^{p_0}(\re^{n})+L^{p_1}(\re^{n}),$$
 then 
$$A(f)=A(f_0)+A(f_1) \in L^{{q_0}}(\re^{n})+L^{{q_1}}(\re^{n}),$$
 and
 $$ \|A(f_j)\|_{L^{q_j}(\re^{n})} \leq \| f_j\|_{L^{p_j}(\re^{n})},\ j=0,\ 1.$$ 
So 
\begin{align*}
  K(t,A(f))=&\inf\limits_{A(f)=F_0+F_1}(\|F_0\|_{L^{q_0}(\re^{n})}
+t\|F_1\|_{L^{q_1}(\re^{n})})\\
 \leq& \inf\limits_{A(f)=A(f_0)+A(f_1)}(\|A(f_0)\|_{L^{q_0}(\re^{n})}
+t\|A(f_1)\|_{L^{q_1}(\re^{n})})\\
\leq & \inf\limits_{A(f)=A(f_0)+A(f_1)}(\|f_0\|_{L^{p_0}(\re^{n})}+t\|f_1\|_{L^{p_1}(\re^{n})}).
\end{align*}
Since $A$ is injective, $A(f)=A(f_0)+A(f_1)\Rightarrow f=f_0+f_1 $ Lebesgue almost everywhere.

Then
$$ K(t,A(f)) \leq \inf\limits_{ f=f_0+f_1}(\|f_0\|_{L^{p_0}(\re^{n})}+t\|f_1\|_{L^{p_1}(\re^{n})})=K(t,f).$$
Using Theorem \ref{teoint} once more we obtain the result.
\end{proof}
Observe that since the linear  operator $E$  defined in \eqref{defE1} is injective and  satisfies
\begin{equation*}\label{Eqqforte}
 \|E(F)\|_{L^{q}(\re^{n+1})} \leq  \| F\|_{L^{q}(\re^{n+1})}
\end{equation*}
 for all $1<p<\infty$ (see \cite{X}), the Proposition \ref{prop2} will be a consequence of Lemma \ref{lemma2}.

Now we define two integral operators: 
\begin{equation}\label{defG}
 G(F)(x, t)=\int_{0}^{t}U(t-s)F(\cdot,s)(x)ds,
\end{equation}
and
\begin{equation}\label{defTT^*}
 (TT^*F)(x,t)=\int_{-\infty}^{+\infty}U(t-\tau)F(x, \tau)d\tau,
\end{equation}
where $U(t)$ is the group defined in \eqref{defU}. We prove the following properties about them:

\begin{proposition}\label{prop3}
Let $1\leq p,\ r< \infty$ such that
$$\frac{1}{p}-\frac{1}{r}=\frac{2}{n+2},$$
 and
$$ \frac{2(n+1)}{n}<r<\frac{2(n+1)(n+2)}{n^2}.$$
Then 
\begin{align}
  \|G(F)\|_{L^{r \infty}(\re^{n+1})} \leq c\|F\|_{L^{p \infty}(\re^{n+1})}\label{dnl},
\end{align}
and
\begin{align}
 \|TT^*(F)\|_{L^{r \infty}(\re^{n+1})}\leq c\|F\|_{L^{p \infty}(\re^{n+1})}\label{dTT^*}.
\end{align}
\end{proposition}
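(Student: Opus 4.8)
The plan is to derive the two Lorentz-space bounds \eqref{dnl}--\eqref{dTT^*} in two stages: first prove the corresponding \emph{strong}-type estimates on ordinary Lebesgue spaces,
\[
\|G(F)\|_{L^r(\re^{n+1})}\le c\,\|F\|_{L^p(\re^{n+1})},\qquad \|TT^*(F)\|_{L^r(\re^{n+1})}\le c\,\|F\|_{L^p(\re^{n+1})},
\]
valid for the range of $r$ stated in the proposition, and then upgrade them to the weak-$L^p$ spaces $L^{r\infty}$, $L^{p\infty}$ by real interpolation, following the $K$-functional argument already used in the proof of Lemma \ref{lemma2}.

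For the first stage I would apply the generalized Strichartz estimates of Theorem \ref{teost} to the group $U(t)$ (this is legitimate by the remark following that theorem) with the \emph{diagonal} choice of exponents $q=r$ and $\tilde r=\tilde q=p'$, so that $\tilde r'=\tilde q'=p$. With this choice the mixed norms collapse to genuine norms on $\re^{n+1}$, namely $L^q_tL^r_x=L^r(\re^{n+1})$ and $L^{\tilde q'}_tL^{\tilde r'}_x=L^p(\re^{n+1})$, and the scaling identity \eqref{des1teost} becomes $\left(\tfrac1p-\tfrac1r\right)\tfrac{n+2}{2}=1$, that is exactly the hypothesis $\tfrac1p-\tfrac1r=\tfrac{2}{n+2}$. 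A direct verification then shows that the remaining admissibility conditions of Theorem \ref{teost}—in particular the two-sided bound on $1/q$—reduce precisely to $\tfrac{2(n+1)}{n}<r<\tfrac{2(n+1)(n+2)}{n^2}$, with the lower bound arising in the case $\tfrac1{\tilde r'}+\tfrac1r\ge1$ and the upper bound in the case $\tfrac1{\tilde r'}+\tfrac1r<1$. Granting this, inequality \eqref{des3teost} gives the strong bound for $G$, and the two-sided ($\int_{-\infty}^{+\infty}$) estimate of Theorem \ref{teost} gives the one for $TT^*$; moreover each bound holds for \emph{every} $r$ in the open interval $I=\left(\tfrac{2(n+1)}{n},\tfrac{2(n+1)(n+2)}{n^2}\right)$ with its paired $p$.

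For the second stage I would interpolate. Since $r$ lies strictly inside the open interval $I$, I can pick $r_0,r_1\in I$ with $r_0<r<r_1$, define $p_j$ by $\tfrac1{p_j}=\tfrac1{r_j}+\tfrac{2}{n+2}$, and choose $\theta\in(0,1)$ by $\tfrac1r=\tfrac{1-\theta}{r_0}+\tfrac{\theta}{r_1}$. Because the gap $\tfrac{2}{n+2}$ is the same at both endpoints, the very same $\theta$ satisfies $\tfrac1p=\tfrac{1-\theta}{p_0}+\tfrac{\theta}{p_1}$, so Theorem \ref{teoint} identifies both ends with equal norms: $(L^{p_0},L^{p_1})_{\theta,\infty}=L^{p\infty}$ and $(L^{r_0},L^{r_1})_{\theta,\infty}=L^{r\infty}$. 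Now, since $G$ is linear, every decomposition $F=F_0+F_1\in L^{p_0}(\re^{n+1})+L^{p_1}(\re^{n+1})$ yields the decomposition $G(F)=G(F_0)+G(F_1)$, and the strong bounds with common constant $c=\max\{c_0,c_1\}$ give
\[
K(t,G(F))\le \|G(F_0)\|_{L^{r_0}}+t\,\|G(F_1)\|_{L^{r_1}}\le c\left(\|F_0\|_{L^{p_0}}+t\,\|F_1\|_{L^{p_1}}\right).
\]
Taking the infimum over all such decompositions gives $K(t,G(F))\le c\,K(t,F)$, and then $\|G(F)\|_{L^{r\infty}}\le c\,\|F\|_{L^{p\infty}}$ by Theorem \ref{teoint}; the identical argument applies to $TT^*$. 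Note that, in contrast with Lemma \ref{lemma2}, injectivity of the operators is not needed, because the decompositions of $G(F)$ induced by decompositions of $F$ already form a subset of all decompositions and so suffice to bound $K(t,G(F))$ from above.

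The main obstacle I expect is purely the bookkeeping in the first stage: confirming that, under the diagonal substitution $q=r$, $\tilde r=\tilde q=p'$, the complete list of admissibility conditions in Theorem \ref{teost} collapses to exactly the single gap relation $\tfrac1p-\tfrac1r=\tfrac{2}{n+2}$ together with the open interval $\tfrac{2(n+1)}{n}<r<\tfrac{2(n+1)(n+2)}{n^2}$, with no extra spurious constraint in the dimensions $n=2,3$. Once that reduction is checked, the interpolation step is entirely routine.
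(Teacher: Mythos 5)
Your proposal is correct and follows essentially the same route as the paper: the diagonal choice $q=r$, $\tilde r'=\tilde q'=p$ in Theorem \ref{teost} to get the strong $L^p(\re^{n+1})\to L^r(\re^{n+1})$ bounds, followed by the real-interpolation ($K$-functional) upgrade to $L^{p\infty}\to L^{r\infty}$, which the paper carries out by invoking Lemma \ref{lemma2}. Your two refinements are both sound and in fact slightly more careful than the paper: the range $\tfrac{2(n+1)}{n}<r<\tfrac{2(n+1)(n+2)}{n^2}$ does come from the two-sided conditions on $1/q$ (not from \eqref{des2teost}, as the paper asserts), and your observation that injectivity is dispensable in the interpolation step matters, since $TT^*F=U(t)\int U(-\tau)F(\tau)\,d\tau$ is not injective and so Lemma \ref{lemma2} does not literally apply to it.
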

\begin{proof}
To prove Property \eqref{dnl} we need Theorem \ref{teost} (with $U(t)$ instead
 of $e^{it\Delta}$) and the interpolation theorem.
In fact taking $r=q$ and ${\tilde{r}}^{'}={\tilde{q}}^{'}=:p$ in Theorem \ref{teost}, the
hypothesis \eqref{des1teost} becomes
 $$\frac{1}{p}-\frac{1}{r}=\frac{2}{n+2},$$
and the inequality \eqref{des3teost} becomes
\begin{align}\label{k1}
 \|G(F)\|_{L^{r}(\re^{n+1})} \leq c\|F\|_{L^{p}(\re^{n+1})},
\end{align}

The restriction $\frac{2(n+1)}{n}<r<\frac{2(n+1)(n+2)}{n^2}$
comes from hypothesis \eqref{des2teost}.

The result follows applying Lemma \ref{lemma2} to inequality \eqref{k1}.
Property \eqref{dTT^*} is proved exactly the same way.
\end{proof}
Now we are ready to prove the our main result:

\begin{proof}[Proof of Theorem \ref{TA}:]
Consider the following operator 

\begin{equation}\label{defphi}
(\Phi u)(t)=U(t) u_0 -iG(\chi|u|^\alpha u +b u E(|u|^\alpha))(t),
\end{equation}
$G$ as in \eqref{defG}.
We want to use the Picard fixed point theorem to find a solution of $u=\Phi(u)$ in
$$ \overline{B(0, 3\delta_1)} =\{f \in L^{\frac{\alpha(n+2)}{2} \infty}(\re^{n+1});\|f\|_{L^{ \frac{\alpha(n+2)}{2}\infty}
(\re^{n+1})}\leq 3\delta_1\}.$$

 To prove $\Phi(\overline{B(0,3\delta_1)} \subset \overline{B(0,3\delta_1)}$ take $u \in \overline{B(0,3\delta_1)}$.

Using the hypothesis $\|U(t)u_0\|_{L^{\frac{\alpha(n+2)}{2} \infty}(\re^{n+1})}<\delta_1$ and Proposition \ref{prop3} 
combined with the definition $\Phi(\cdot)$ in  \eqref{defphi} , we obtain
$${\|\Phi(u)\|}_{L^{\frac{\alpha(n+2)}{2} \infty}(\re^{n+1})}\leq2\left(\delta_1+
\||u|^\alpha u\|_{L^{\frac{\alpha(n+2)}{2(\alpha+1)} \infty}(\re^{n+1})} +\|b u E(|u|^\alpha)\|_{L^{\frac{\alpha(n+2)}
{2(\alpha+1)} 
\infty}(\re^{n+1})}\right).$$
Applying Proposition \ref{prop2}  and Holder's inequality we get
$${\|\Phi(u)\|}_{L^{\frac{\alpha(n+2)}{2} \infty}(\re^{n+1})}\leq
2\left(\delta_1+
 \|u\|_{L^{\frac{\alpha(n+2)}{2} \infty}
(\re^{n+1})}^{\alpha+1}
+|b| \| u\|_{L^{\frac{\alpha(n+2)}{2}
 \infty}(\re^{n+1})} \| u(t)\|_{L^{\frac{\alpha(n+2)}{2} \infty}(\re^{n+1})}^\alpha\right).$$

Using that $u \in\overline{B(0,3\delta_1)}$ and choosing $0<\delta_1\ll 1$ we have
$${\|\Phi(u)\|}_{L^{\frac{\alpha(n+2)}{2} \infty}(\re^{n+1})}\leq 2c\delta_1+4c(3\delta_1)^{\alpha+1} +4c|b|
(3\delta_1)^{\alpha+1}<3\delta_1.$$

Now we prove the contraction in $B(0,3\delta_1)$. Take $u,v \in B(0,3\delta_1)$:
$$\Phi(u)-\Phi(v)=iG(\chi(|v|^\alpha v-|u|^\alpha u))+iG\big(b(vE(|v|^\alpha)-uE(|u|^\alpha)\big).$$
By Proposition \ref{prop3} we get
\begin{align*}
 &{\|\Phi(u)-\Phi(v)\|}_{L^{\frac{\alpha(n+2)}{2} \infty}(\re^{n+1})} \\
\leq \ &2c\left({\|v(|v|^{\alpha}-|u|^{\alpha})\|}_{L^{\frac{\alpha(n+2)}{2(\alpha+1)} \infty}(\re^{n+1})}
+{\||u|^{\alpha}(u-v)\|}_{L^{\frac{\alpha(n+2)}{2(\alpha+1)} \infty}(\re^{n+1})}\right)\\
&+2c|b|\left({\|E(|v|^{\alpha})(v-u)\|}_{L^{\frac{\alpha(n+2)}{2(\alpha+1)} \infty}(\re^{n+1})}
+{\|u(E(|v|^{\alpha})-E(|u|^{\alpha}))\|}_{L^{\frac{\alpha(n+2)}{2(\alpha+1)} \infty}(\re^{n+1})}\right).
\end{align*}

Applying Holder's inequality and Proposition \ref{prop2} we obtain
\begin{align*}
 &{\|\Phi(u)-\Phi(v)\|}_{L^{\frac{\alpha(n+2)}{2} \infty}(\re^{n+1})} \\
\leq \ &2c\bigg({\|v\|}_{L^{\frac{\alpha(n+2)}{2} \infty}(\re^{n+1})}{\||v|^\alpha-|u|^\alpha\|}_{L^{\frac{(n+2)}{2} \infty}
(\re^{n+1})}
+{\|u\|}^{\alpha}_{L^{\frac{\alpha(n+2)}{2} \infty}(\re^{n+1})}{\|u-v\|}_{L^{\frac{\alpha(n+2)}{2} \infty}
(\re^{n+1})}\bigg)\\
&+2c|b|\bigg({\|v\|}^{\alpha}_{L^{\frac{\alpha(n+2)}{2} \infty}(\re^{n+1})}{\|u-v\|}_{L^{\frac{\alpha(n+2)}{2} \infty}
(\re^{n+1})}
+{\|u\|}_{L^{\frac{\alpha(n+2)}{2} \infty}(\re^{n+1})}{\||v|^{\alpha}-|u|^{\alpha}\|}_{L^{\frac{(n+2)}{2} \infty}
(\re^{n+1})}\bigg).
\end{align*}
Now we set
$$g(u)=|u|^{\alpha}.$$ 
It follows by the Mean Value Theorem that 
\begin{equation*}
 |g(u)-g(v)|\leq c(\alpha)(|u|^{\alpha-1}+|v|^{\alpha-1})|u-v|.
\end{equation*}
This property and Holder's inequality imply that
\begin{align*}
&{\||v|^{\alpha}-|u|^{\alpha}\|}_{L^{\frac{(n+2)}{2} \infty}(\re^{n+1})}\\
\leq \  & c(\alpha)\left({\|u\|}^{\alpha-1}_{L^{\frac{\alpha(n+2)}{2} \infty}(\re^{n+1})}{\|u-v\|}_
{L^{\frac{\alpha(n+2)}{2} \infty}(\re^{n+1})}
+{\|v\|}^{\alpha-1}_{L^{\frac{\alpha(n+2)}{2} \infty}(\re^{n+1})} {\|u-v\|}_{L^{\frac{\alpha(n+2)}{2} \infty}(\re^{n+1})}
\right).
\end{align*}
Finally by the last inequality and the hypothesis $u,\ v \in B(0,3\delta_1)$ we get

$${\|\Phi(u)-\Phi(v)\|}_{L^{\frac{\alpha(n+2)}{2} \infty}(\re^{n+1})}\leq \delta_1^\alpha(c_1+c_2|b|){\|v-u\|}
_{L^{\frac{\alpha(n+2)}{2} 
\infty}(\re^{n+1})}.$$
Again taking $0< \delta_1\ll 1$ we get the contraction.
\end{proof}

 \begin{proof}[Proof of Proposition \ref{propositionC}:]
 By hypothesis $u \in L^{\frac{\alpha(n+2)}{2}\infty}(\re^{n+1})$. So by Holder's inequality and
 Proposition \ref{prop2}
 $$ |u|^\alpha u \ \ \text{ and }\ \  uE( |u|^\alpha )\in L^{\frac{\alpha(n+2)}{2(\alpha+1)}\infty}(\re^{n+1}).$$
 
  Now, by Remark \ref{mergulho} we can write 
 \begin{equation}\label{dec1}
 |u|^\alpha u=f_1+f_2 \ \  \text{ and } \ \ uE( |u|^\alpha )=f_3+f_4,
 \end{equation}
 
 where $f_j \in L^{p_j}(\re^{n+1})$  for some $1\leq p_1<\frac
{ \alpha(n+2)}{2(\alpha+1)}<p_2<\infty$ and   $1\leq p_3<\frac
{ \alpha(n+2)}{2(\alpha+1)}<p_4<\infty.$ 

Replacing \eqref{dec1} in \eqref{eqintds} we get 
\begin{equation}\label{dec3}
u(t)=U(t)u_0+i\chi G(f_1)(t)+i\chi G(f_2)(t)+ibG(f_3)(t)+ibG(f_4)(t).
\end{equation}
Observe that from the decomposition \eqref{dec3} we have that $u(t) \in S'(\re^n)$.

Now, if we take $\phi \in S(\re^n)$ then $U(t)\phi \in C(\re:S(\re^n))$ and also $G(\phi)(t) \in C(\re: S(\re^n))$. 
By duality we can 
extend $U(t)$ to $S'(\re^n)$ and get $U(t)\phi \in C(\re:S'(\re^n))$ for $\phi \in S'(\re^n)$. 

Using dominated convergence theorem  we have $G(\phi)(t) \in C(\re:S'(\re^n))$ for $\phi \in S'(\re^n)$ and by \eqref{dec3}
\begin{equation}\label{ucontinua}
u(t) \in  C(\re: S'(\re^n)). 
\end{equation}

 Letting $t\rightarrow 0$ in \eqref{dec3} we get
 $u(0)=u_0$.

Now we prove that $u(t)$ satisfies the equation
\begin{equation}\label{solucao}
i u_t + \delta u_{x_1x_1} +\sum_{j=2}^{n}{u_{x_jx_j}}=\chi |u|^\alpha u +bu E(|u|^\alpha),
\end{equation}
in $S'(\re^n)$ for all $t \in \re:$

Define $F(u):=\chi |u|^\alpha u +bu E(|u|^\alpha)$. 

Note that by \eqref{dec1} and \eqref{ucontinua} we have
\begin{equation*}\label{Fu}
 F(u)(t) \in C(\re, S'(\re^n)).
\end{equation*}

Using the integral equation \eqref{eqintds} and the definition of the operator $G$ in \eqref{defG} we have
the following expression for $u(t)$
\begin{equation}\label{v1}
 u(t)=U(t)u_0+iG(Fu)(t).
\end{equation}

Using group properties, Lebesgue dominated convergence theorem  and  the
Lebesgue  differentiation theorem combined with the expression of $u(t)$ in \eqref{v1} we obtain   that for any $ \ 
\phi \in S(\re^n)$
\begin{equation*}\label{identidade}
 i\lim_{h \rightarrow 0}\langle\frac{u(t+h)-u(t)}{h}, \phi\rangle=\langle - (\delta\partial_{x_1x_1}
 +\sum_{j=2}^{n}\partial_{x_jx_j})u(t)+F(u)(t), \phi\rangle,
\end{equation*}
where $\langle f, g\rangle=\int_{\re^n}f(x)g(x)dx$, which proves \eqref{solucao}.

To prove $\|u(t_0)\|_Y<\infty $, take $r=\frac{\alpha(n+2)}{2}$
on the Inequality \eqref{dTT^*} of Proposition \ref{prop3}.

 Then we have
 \begin{equation}\label{desTT*}
 \|TT^*F\|_{L^{ \frac{\alpha(n+2)}{2}\infty}(\re^{n+1})}\leq c
\|F\|_{L^{ \frac{\alpha(n+2)}{2(\alpha+1)}\infty}(\re^{n+1})}.\\
 \end{equation}
From the last property and  identity \eqref{idA}, $\forall \ t_0 \in \re$ we get 
$$\|U(t)\int_{-\infty}^{+\infty} U(t_0-s)F(s)ds\|_{L^{ \frac{\alpha(n+2)}{2}\infty}(\re^{n+1})}\leq
\|F\|_{L^{ \frac{\alpha(n+2)}{2(\alpha+1)}\infty}(\re^{n+1})}.$$
Now taking $\chi_{(0,t_0)}F$ instead of $F$ in the last inequality we have

\begin{equation}\label{desemY}
\|U(t)G(F)(t_0)\|_{L^{ \frac{\alpha(n+2)}{2}\infty}(\re^{n+1})}\leq
\|F\|_{L^{ \frac{\alpha(n+2)}{2(\alpha+1)}\infty}(\re^{n+1})},
\end{equation}
where $G$ was defined in \eqref{defG}.

 Now taking  $t=t_0$ in the integral equation \eqref{eqintds}
and applying $U(t)$ we have
$$U(t)u(t_0)=U(t+t_0)u_0+iU(t)G(\chi|u|^\alpha u +buE(|u|^\alpha ))(t_0).$$
Combining property \eqref{idA}, inequality \eqref{desemY} and the same arguments as in Theorem \ref{TA} we obtain

\begin{align*}
 \|U(t)u&(t_0)\|_{L^{ \frac{\alpha(n+2)}{2}\infty}(\re^{n+1})}\\
\leq & 2\|U(t)u_0\|_{L^{ \frac{\alpha(n+2)}{2}\infty}(\re^{n+1})}+
4\|u\|_{L^{ \frac{\alpha(n+2)}{2}\infty}(\re^{n+1})}^{\alpha+1}
 +4|b|\|u\|_{L^{ \frac{\alpha(n+2)}{2}\infty}(\re^{n+1})}
\|u\|_{L^{ \frac{\alpha(n+2)}{2}\infty}(\re^{n+1})}^{\alpha}<\infty.
\end{align*}

Finally, to prove  the last statement of the theorem we set
\begin{equation*}\label{u_+}
 u_{+}=u_0 +i\int_{0}^{\infty}U(-\tau)(\chi|u|^\alpha u+buE(|u|^\alpha))(\tau)d\tau.
\end{equation*}

It follows from Inequalitie \eqref{desTT*} that:
$$\|U(t)u_+\|_{L^{ \frac{\alpha(n+2)}{2}\infty}(\re^{n+1})}\leq2\left(\|U(t)u_0\|_
{L^{ \frac{\alpha(n+2)}{2}\infty}(\re^{n+1})}
+\|(\chi|u|^\alpha u +buE(|u|^\alpha ))\|_{L^{ \frac{\alpha(n+2)}{2(\alpha +1)}\infty}(\re^{n+1})}\right)<\infty.$$

We deduce from the decompositions in \eqref{dec1}  that
$$U(-t)u(t)-u_+=\int_{t}^{\infty}U(-\tau)(\chi|u|^\alpha u+buE(|u|^\alpha))(\tau)d\tau\rightarrow0 \text{ in }
S'(\re^n) \text{ as } t\rightarrow \infty.$$
The result for $t\rightarrow -\infty$ is proved similarly.

 \end{proof} 

\section{Self-similar solutions}\label{selfsimilar}

In this section we find self-similar solutions to \eqref{eqs5} for $\delta>0.$
Without lost of generality we can suppose $\delta=1$, so our equation becomes:

\begin{equation}\label{eqs1}
\left\{\begin{array}{rcl}
i u_t + \Delta u & = &  \chi |u|^\alpha u +bu E(|u|^\alpha), \\
 u(x,0)  &=&  u_0(x).
\end{array}
\right. \qquad\qquad \forall\, x \in \re^n , n=2,3,\,t \in \re,
\end{equation}
We will need the following proposition:

\begin{proposition}\label{prop1}
 Let $\varphi(x)=|x|^{-p}$ where $ 0<\Re p<n.$ Then $e^{it\Delta}\varphi$ is given   
the explicit formula below for $x\neq0$ and $t>0$:
\begin{align*}
 e^{it\Delta}\varphi(x)=&|x|^{-p}\sum_{k=0}^{m}A_k(a, b)e^{k\pi i/2}\left(\frac{|x|^2}{4t}\right)^{-k}\\
&+|x|^{-p}A_{m+1}(a, b)\left(\frac{|x|^2}{4t}\right)^{-m-1}\dfrac{(m+1)e^{aki/2}}{\Gamma (m+2-b)}\\
&\quad \times
\int_{0}^{\infty}\int_{0}^{1}(1-s)^m\left(-i-\frac{4ts\tau}{|x|^2}\right)^{-a-m-1}e^{-\tau}\tau^{m+1-b}dsd\tau\\
&+e^{i|x|^2/4t}|x|^{-n+p}(4t)^{\frac{n}{2}-p}\sum_{k=0}^{l}B_k(b, a)e^{-(n+2k)\pi i/4}\left(\frac{|x|^2}{4t}\right)^{-k}\\
&+e^{i|x|^2/4t}|x|^{-n+p}(4t)^{\frac{n}{2}-p}B_{l+1}(b, a)\left(\frac{|x|^2}{4t}\right)^{-l-1}\dfrac{(l+1)e^{aki/2}}
{\Gamma (l+2-b)}\\
&\quad \times\int_{0}^{\infty}\int_{0}^{1}(1-s)^l\left(-i-\frac{4ts\tau}{|x|^2}\right)^{-b-l-1}e^{-\tau}\tau^{l+1-a}dsd\tau,
\end{align*}
where $a=p/2, b=(n-p)/2, m, l \in \N$ such that $m+2>\Re b$ and $l+2>\Re a$\\
and
$$A_k(a, b)=\dfrac{\Gamma(a+k)\Gamma(k+1-b)}{\Gamma(a)\Gamma(1-b)k!},\ B_k(b, a)=\dfrac{\Gamma(b+k)\Gamma(k+1-a)}
{\Gamma(a)\Gamma(1-a)k!}$$
where $\Gamma$ denotes the gamma function.
\end{proposition}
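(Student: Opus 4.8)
The plan is to pass to the Fourier side, where $\varphi=|x|^{-p}$ has an explicit distributional transform, reduce the inverse transform of the Schr\"odinger multiplier to a single radial (Hankel) integral, evaluate that integral as a confluent hypergeometric (Kummer) function, and then read off the two series and the two remainder integrals from its large-argument expansion. First I would use the classical Riesz identity
$$\widehat{|\cdot|^{-p}}(\xi)=C_{n,p}\,|\xi|^{-(n-p)},$$
with $C_{n,p}$ an explicit ratio of gamma functions, which holds as an equality of tempered distributions exactly on the strip $0<\Re p<n$ and thereby disposes at once of the non-integrability of $|x|^{-p}$. Applying the free multiplier and inverting gives
$$e^{it\Delta}\varphi(x)=C_{n,p}\int_{\re^n}e^{2\pi i x\cdot\xi}\,e^{-it(2\pi)^2|\xi|^2}\,|\xi|^{-(n-p)}\,d\xi,$$
the normalization being fixed so that $e^{it\Delta}$ has kernel $(4\pi it)^{-n/2}e^{i|x|^2/(4t)}$; the oscillatory integral is made rigorous by replacing $it(2\pi)^2$ with $it(2\pi)^2+\epsilon$, $\epsilon>0$, and letting $\epsilon\downarrow0$ at the end.

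Next, rotational symmetry lets me integrate over $S^{n-1}$, turning the plane wave into a Bessel function $J_{n/2-1}$ and collapsing everything, up to explicit powers of $t$ and $|x|$, to the single radial integral
$$\int_0^\infty e^{-it(2\pi)^2\rho^2}\,\rho^{\,p-n/2}\,J_{n/2-1}\!\left(2\pi\rho|x|\right)d\rho .$$
This is a classical Weber-type integral: the known closed form for $\int_0^\infty e^{-A\rho^2}\rho^{\mu-1}J_\nu(\beta\rho)\,d\rho$ evaluates it, with $\mu=p-n/2+1$ and $\nu=n/2-1$, to a Kummer function $M(a,c,z)$ whose parameters come out exactly as $a=p/2$ and $c=n/2$ (so that $b=(n-p)/2=c-a$) and whose argument is $z=-\beta^2/(4A)=i|x|^2/(4t)$.

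Then I would expand $M(a,c,z)$ for large $|z|$ through the connection formula
$$M(a,c,z)=\frac{\Gamma(c)}{\Gamma(c-a)}(-z)^{-a}\sum_{k\geq0}\frac{(a)_k(a-c+1)_k}{k!}(-z)^{-k}+\frac{\Gamma(c)}{\Gamma(a)}e^{z}z^{\,a-c}\sum_{k\geq0}\frac{(c-a)_k(1-a)_k}{k!}z^{-k}.$$
Since $a-c+1=1-b$ and $c-a=b$, the Pochhammer ratios collapse to the gamma-function coefficients $A_k(a,b)$ and $B_k(b,a)$ of the statement, while substituting $z=i|x|^2/(4t)$ into $(-z)^{-a-k}$ and $z^{a-c-k}$ produces the phases $e^{k\pi i/2}$ and $e^{-(n+2k)\pi i/4}$ together with the prefactors $|x|^{-p}$, respectively $e^{i|x|^2/(4t)}|x|^{-n+p}(4t)^{n/2-p}$ (the exponential in the second series being simply $e^{z}$); these are exactly the two series in the proposition. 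The two double integrals are the matching finite-order remainders: starting from the Euler representation $U(a,c,z)=\frac{1}{\Gamma(a)}\int_0^\infty e^{-z\tau}\tau^{a-1}(1+\tau)^{c-a-1}\,d\tau$ and expanding $(1+\tau)^{c-a-1}$ by Taylor's formula with integral remainder yields precisely the factor $\int_0^1(1-s)^m\,ds$ and the piece $\int_0^\infty e^{-\tau}\tau^{m+1-b}\,d\tau$, and the hypotheses $m+2>\Re b$, $l+2>\Re a$ are exactly the conditions guaranteeing the convergence of these remainder integrals.

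The main obstacle is bookkeeping rather than any single hard estimate: I must keep the branches of $(-z)^{-a}$ and $z^{a-c}$ at the purely imaginary argument $z=i|x|^2/(4t)$ under tight control so that every phase $e^{k\pi i/2}$, $e^{-(n+2k)\pi i/4}$ and the power $(4t)^{n/2-p}$ emerge exactly as written, and I must fold the Riesz constant $C_{n,p}$, the $(2\pi)$ factors from the angular integration, and the prefactor of the Weber integral into the clean constant structure of the statement. Once the Kummer identification and the choice of branches are pinned down, the remaining steps are routine manipulations with gamma-function and Pochhammer identities.
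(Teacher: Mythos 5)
The paper does not actually prove this proposition: its ``proof'' is a citation to \cite{CW1}, so there is no in-paper argument to compare yours against line by line. In the cited source the formula is obtained not through the Fourier transform of $|x|^{-p}$ but through the subordination identity $|x|^{-p}=\Gamma(a)^{-1}\int_0^\infty e^{-s|x|^2}s^{a-1}\,ds$ combined with the explicit action of $e^{it\Delta}$ on Gaussians, $e^{it\Delta}e^{-s|x|^2}=(1+4ist)^{-n/2}e^{-s|x|^2/(1+4ist)}$; the factors $\left(-i-\frac{4ts\tau}{|x|^2}\right)$ in the remainder integrals are visibly inherited from $(1+4ist)$ after a change of variables. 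Your route (Riesz identity, Hankel reduction, Weber integral, Kummer function $M(a,n/2,i|x|^2/4t)$) is a legitimate alternative for the two asymptotic series: the parameters $a=p/2$, $c=n/2$, $b=c-a$, and the identification of $(a)_k(1-b)_k/k!$ with $A_k(a,b)$ and of $(b)_k(1-a)_k/k!$ with $B_k(b,a)$ are all correct, and the branch bookkeeping you flag is indeed the only delicate point in that part.

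The genuine gap is in your derivation of the remainder terms. Taylor-expanding $(1+\tau/z)^{c-a-1}$ in the Euler representation $U(a,c,z)=\Gamma(a)^{-1}\int_0^\infty e^{-zt}t^{a-1}(1+t)^{c-a-1}\,dt$ (after $t=\tau/z$) produces a remainder with integrand $e^{-\tau}\,\tau^{a+m}\,(1+s\tau/z)^{b-m-2}$, whereas the proposition's remainder has $e^{-\tau}\,\tau^{m+1-b}\,\bigl(-i-\tfrac{4ts\tau}{|x|^2}\bigr)^{-a-m-1}$, which is $(1+s\tau/z)^{-a-m-1}$ up to a constant phase since $-i-\tfrac{4ts\tau}{|x|^2}=-i\,(1+s\tau/z)$ with $z=i|x|^2/4t$. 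The exponent pairs $(a+m,\;b-m-2)$ and $(m+1-b,\;-a-m-1)$ coincide only when $a+b=1$, i.e.\ $n=2$; for $n=3$ your construction yields a different (possibly equally valid, but not the stated) remainder, and the hypotheses $m+2>\Re b$, $l+2>\Re a$ are tailored to the stated form ($\tau^{m+1-b}$ integrable near $0$), not to yours. So you must either supply the extra transformation taking your remainder into the stated one, or switch to the Gaussian-subordination derivation, which produces the stated integrals directly. You should also make explicit that the $\epsilon$-regularized Weber integral converges to the purely imaginary-parameter case and that the distributional identities upgrade to a pointwise formula for $x\neq0$, $t>0$.
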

\begin{proof}
 We refer to \cite{CW1} for a proof of this proposition.
\end{proof}

We already know that a self-similar solution must have an homogeneous initial condition with degree $-2/\alpha$.
So the idea is to prove that  $u_0(x)=\epsilon |x|^{-2/\alpha} \in Y$ where $0<\epsilon \ll 1 $.
Then by Theorem \ref{TA} and Proposition \ref{propositionC} we have existence and uniqueness for equation
\eqref{eqs1} in $Y$. Since $u(x,t)$ and $\beta^{2/\alpha}u(\beta x, \beta^2 t)$ are both solutions, we
must have $u=u_\beta$ and therefore self-similar solutions in $Y$.

To prove that $u_0 \in Y$, we consider the homogeneous problem with initial condition

$u_0(x)=|x|^{-2/\alpha}$:
\begin{equation}\label{eqs2}
\left\{\begin{array}{rcl}
i u_t + \Delta u & = & 0 ,\\
 u(x,0)  &=& |x|^{-2/\alpha}. 
\end{array}
\right. \qquad\qquad \forall\, x \in \re^n , n=2,3,\,t \in \re.
\end{equation}
We know that the solution to the equation \eqref{eqs2} is given by
$$u(x, t)=U(t)u_0(x),$$
 where $ U(t)=e^{it\Delta}.$

Since $u_\beta(x, t)=\beta^{2/\alpha}u(\beta x, \beta^2 t),\ \beta>0$ 
is also a solution, we must have
$$\beta^{2/\alpha}u(\beta x, \beta^2 t)=U(t)u_0(x)=u(x, t).$$
Taking $\beta=1/\sqrt{t}$ we get 
\begin{equation}\label{u}
 u(x,t)=t^{-1/\alpha}f(x/\sqrt{t}),
\end{equation}
where $f(x)=u(x, 1).$

By Proposition \ref{prop1} we have that for $\alpha>2/n$
\begin{equation}\label{f}
 |f(x)| \leq c(1+|x|)^{-\sigma} \text{ where }\sigma=\left\{\begin{array}{rcl}
 2/\alpha ;& \alpha\geq 4/n\\
 n-2/\alpha;& \alpha<4/n.
\end{array}
\right.
\end{equation}
Next, we calculate $\alpha(\lambda, u)=|\{(x, t); |u(x, t)|>\lambda\}|$.\\
By  \eqref{u} and \eqref{f} 
\begin{align*}
 \alpha(\lambda, u) \leq& \int_{\{(x, t); |t^{-1/\alpha}\left(1+\frac{|x|}{\sqrt{t}}\right)^{-\sigma}|>\lambda\}}d(x,t)\leq
\int_{\{(x, t); 0\leq t<\lambda^{-\alpha} \text{ and }
 |x|<t^{1/2}[(t\lambda^\alpha)^{-1/\alpha\sigma}-1]\}}d(x,t)\\
\leq & c \lambda^{-n/}\int_{0}^{\lambda^{-\alpha}}t^{\frac{n}{2}-\frac{n}{\sigma\alpha}}[1-(t\lambda^\alpha)^{\frac{1}
{\sigma\alpha}}]^n dt\leq \lambda^{\frac{-\alpha(n+2)}{2}}.
\end{align*}
Therefore $\|U(\cdot)u_0\|_{L^{ \frac{\alpha(n+2)}{2}\infty}(\re^{n+1})}\leq c.$\\
Choosing $0<\epsilon\ll 1$ and taking the initial condition $u_0(x)=\epsilon|x|^{-2/\alpha}$ we 
conclude the result.
\subsection*{Aknowledgements}I would like to thank my advisor Professor Felipe Linares for his help, his comments and all the fruitful
 discussions we had during the preparation of this work.


\begin{thebibliography}{article}
\addcontentsline{toc}{chapter}{Bibliography}
\bibitem[AF]{AF} J.M. Ablowitz, A.S. Fokas, On the inverse scattering transform of multidimensional
nonlinear equations, J. Math, Phys., Vol.{25} (1984), 2494-2505.
\bibitem[AnFr]{AnFr} D. Anker, N.C. Freeman, On the soliton solutions of the Davey-Stewartson equation for long waves
, Proc. R. Soc. A , Vol.{360} (1978), 529-540. 
\bibitem[AH]{AH} J.M. Ablowitz, R. Haberman, Nonlinear evolution equations in two and three dimensions, Phys. Rev. Lett.,
 Vol.{35} (1975),
 1185-1188.
\bibitem[AS]{AS} J.M. Ablowitz, A. Segur, Solitons and Inverse Scattering Transform, PA:SIAM (1981).
 \bibitem[BeL]{BeL} J. Bergh, J. Lofstrom, Interpolation Spaces. An introduction, Springer-Verlag, Berlin-Heidelberg-New
 York, 1976.
\bibitem[C]{C} H. Cornille, Solutions of the generalized nonlinear Schr\"odinger equation in two spatial dimensions,
 J. Math. Phys., Vol.{20} (1979), 199-209.
\bibitem[Ch]{Ch} H. Chihara, The initial value problem for the elliptic-hyperbolic Davey-Stewartson equation, J. Math.
 Kyoto Univ.,
Vol.{39} (1999), 41-66.
 \bibitem[CP]{CP} M. Cannone, F. Planchon, Self-similar solution for the Navier-Stokes equations in
 $\re^3$, Comm. PDE, Vol.{21} (1996), 179-193.
\bibitem[CVeVi]{CVeVi} T. Cazenave, L. Vega, M.C. Vilela, A note on the nonlinear Schr\"odinger equation in weak $L^p$
 spaces, Comm. Contemporary
 Math., Vol.{3} (2001), 153-162.
 \bibitem[CW1]{CW1} T. Cazenave, F.B. Weissler, Asymptotically self-similar global solutions of the nonlinear
 Schr\"odinger and heat
 equations, Math. Z., Vol.{228} (1998), 83-120.
\bibitem[CW2]{CW2} T. Cazenave, F.B. Weissler, More self-similar solutions of the nonlinear Schr\"odinger equation,
 NoDEA Nonlinear Differential
 Equations Appl., Vol.{5} (1998), 355-365.
\bibitem[DR]{DR} V.D. Djordjevic, L.G. Redekopp, On two-dimensional packets of capillary-gravity waves, J. Fluid Mech., 
Vol.{79} 
(1977), 703-714.
\bibitem[DS]{DS} A. Davey, K. Stewartson, On three dimensional packets of surface waves, Proc. Roy. London Soc.
A, Vol.{338} (1974), 101-110.
\bibitem[F]{F} G. Furioli, On the existence of self-similar solutions of the nonlinear Schr\"odinger equation with 
power nonlinearity between 1 and 2, Differential Integral Equations, Vol.{14} , no 10 (2001), 1259-1266.
\bibitem[FS]{FS} A.S. Fokas, P.M. Santini, Recursion operators and bi-Hamiltonian structures in multidimensions I, II,
 Commun. Math. Phys., Vol.{115}, no 3 (1988), 375-419, Vol.{116}, no 3 (1988), 449-474.
\bibitem[FSu]{FSu} A.S. Fokas, L.Y. Sung, On the solvability of the N-wave, Davey-Stewartson and Kadomtsev-Petviashvili
 equations, Inverse Problems, Vol.{8} (1992), 375-419.
\bibitem[GM]{GM} Y. Giga, T. Miyakawa, Navier-Stokes flow in $\re^3$ with measures as initial vorticity and 
Morrey spaces, Comm. Partial Differential Equations, Vol.{14} (1989), 673-708.
 \bibitem[GS]{GS} J.M. Ghidaglia, J.C. Saut, On the initial problem for the Davey-Stewartson systems, Nonlinearity, 
Vol.{3}
 (1990), 475-506.
\bibitem[H1]{H1} N. Hayashi, Local existence in time of small solutions to the Davey-Stewartson system, Annales de
 l'I.H.P.
 Physique Theorique, Vol.{65} (1996), 313-366.
\bibitem[H2]{H2} N. Hayashi, Local existence in time of solutions to the elliptic-hyperbolic Davey-Stewartson system
 without 
smallness condition on the data, J.Analys\'e Math\'ematique, Vol.{73} (1997), 133-164. 
\bibitem[HH1]{HH1} N. Hayashi, H. Hirata, Global existence and asymptotic behaviour of small solutions to the elliptic-
hyperbolic
 Davey-Stewartson system, Nonlinearity, Vol.{9} (1996), 1387-1409.
\bibitem[HH2]{HH2} N. Hayashi, H. Hirata, Local existence in time of small solutions to the elliptic-hyperbolic
 Davey-Stewartson system
 in the usual Sobolev space, Proc. Edinburgh Math. Soc., Vol.{40} (1997), 563-581.
\bibitem[HS]{HS} N. Hayashi, J.C. Saut, Global existence of small solutions to the Davey-Stewartson and the Ishimori
 systems,
Differential Integral Equations, Vol.{8} (1995), 1657-1675.
\bibitem[K]{K} M. Kawak, A semilinear heat equation with singular initial data, Proc. Royal Soc. Edinburgh Sect. A,
Vol.{128} (1998), 745-758.
\bibitem[KW]{KW} O. Kavian, F. Weissler, Finite energy self-similar solutions of a nonlinear wave equation, Comm. PDE,
 Vol.{15}
(1990), 1381-1420.
 \bibitem[LP1]{LP1} F. Linares, G. Ponce, On the Davey-Stewartson systems, Ann. Inst. Henry Poincar\'e, Vol.{10} 
no 5 (1993), 523-548. 
\bibitem[LP2]{LP2} F. Linares, G. Ponce, Introduction to Nonlinear Dispersive Equation, Springer, New York, 2009, 256pp.
 \bibitem[McV]{McV} M.C. Vilela, Las estimaciones de Strichartz bilineales en el contexto de la ecuaci\'on de Schr\"odinger,
 Ph.D.thesis, Universidad del Pais Vasco (2003), Bilbao.
\bibitem[P]{P} F. Planchon, Self-Similar solutions and semilinear wave equations in Besov spaces, J. Math. Pures Appl.,
  Vol.{79}, no 8 (2000), 809-820.
\bibitem[Pe]{Pe} H. Pecher, Self-similar and asymptotically self-similar solutions of nonlinear wave equations, Math. Ann.,
Vol.{316} (2000), 259-281. 
\bibitem [RY1]{RY1} F. Ribaud, A. Youssfi, Regular and self-similar solutions of nonlinear Schr\"odinger equations, 
J. Math Pures Appl., Vol.{77} (1998), 1065-1079.
\bibitem[RY2]{RY2} F. Ribaud, A. Youssfi, Global solutions and self-similar solutions of semilinear wave equation, 
Math. Z., Vol.{239} (2002), 231-262.
\bibitem[Su]{Su} L.Y. Sung, An inverse-scattering transform for the Davey-Stewartson II equations, Part III, 
J. Math Anal. Appl., Vol.{183} (1994), 477-494.
\bibitem[STW]{STW} S. Snoussi, S. Tayachi, F.B. Weissler, Asymptoticaly self-similar global solutions of a general
 semilinear heat equation, Math. Ann., Vol.{321} (2001), 131-155.
 \bibitem[X]{X} Z. Xiangking, Self-Similar solutions to a generalized Davey-Stewartson system, Adv. Math. (China), 
Vol.{36}, no
 5 (2007), 579--585.
\end{thebibliography}
\end{document}